\documentclass[12pt,final,1p,times,review]{elsarticle}

\usepackage{amssymb}
\usepackage{amsthm}
\usepackage{hyperref}
\usepackage{amsmath}
\usepackage{amsmath,amssymb,amsopn,amsfonts,mathrsfs,amsbsy,amscd}
\usepackage{longtable}
\usepackage{newtxtext, newtxmath}
\usepackage{geometry}
\journal{???}

\newcommand{\e}{\epsilon}

\newtheorem{definition}{Definition}[section]
\newtheorem{theorem}{Theorem}[section]

\newtheorem{lemma}{Lemma}[section]

\newtheorem{remark}{Remark}[section]
\numberwithin{equation}{section}

\begin{document}
\bibliographystyle{amsplain}
\begin{frontmatter}
\title{\textbf{The Classification of the Rota-Baxter Operators on Four-dimensional Grassmann Algebra}}

\author[label1]{Kamran Shakoor\corref{mycorrespondingauthor}}
\cortext[mycorrespondingauthor]{Kamran Shakoor}
\author[label1]{Noor-ul-Ain}
\address[label1]{Abdus Salam School of Mathematical Sciences,\\ GC University, Lahore-Pakistan\\e-mail:
			kamranshakoor@sms.edu.pk, noor-ul-ain-25@sms.edu.pk}

\begin{abstract}
The purpose of this paper is to classify all the Rota-Baxter operators explicitly on Grassmann algebra in dimension four over the reals.
\end{abstract}

\Huge

\begin{keyword}
Rota-Baxter operator, Rota-Baxter algebra, Grassmann algebra, Generalized quaternion algebra.

{\it{2020 Mathematics Subject Classification:}} 15B33; 17B38.
\end{keyword}
\end{frontmatter}

\section{Introduction} \label{Introduction}

Rota--Baxter algebras were first formulated by Glen E. Baxter \cite{Baxter1960} in 1960,
motivated by problems in probability theory, specifically fluctuation theory for
random walks. The identity Baxter isolated,
\[
R(x)R(y) = R\big(R(x)y + xR(y)\big),
\]
was subsequently recognized by Rota \cite{Rota1969, Rota1995} as an algebraic structure of
independent combinatorial interest, and Rota's foundational work — together with that
of his collaborators and students — established Rota--Baxter algebras as a central
object in modern algebraic combinatorics. In recent decades, the Rota--Baxter
algebraic framework has proven instrumental in resolving analytical and combinatorial
problems \cite{Tang2}, and has found broad applications across diverse fields of mathematics and
mathematical physics. Perhaps the most celebrated of these is the connection, discovered independently by Belavin
and Drinfeld \cite{BelavinDrinfeld} and by Semenov-Tian-Shansky \cite{SemenovTianShansky}, between Rota--Baxter operators of weight zero on Lie algebras and solutions of the classical Yang--Baxter equation \cite{Gao} — an equation that itself plays a fundamental role in symplectic geometry, integrable systems, quantum groups, and topological quantum field theory. Rota--Baxter structure also underlies the splitting of associative algebras into dendriform \cite{EG-book} and tridendriform \cite{BaiGuoNi} structures, governs the algebraic
Birkhoff decomposition \cite{Guo} used in Connes--Kreimer renormalization \cite{ConnesKreimer} of perturbative quantum field theories, and appears in the theory of shuffle algebras and multiple zeta values, in operads and Hopf algebras, and in the integration and geometrization of Lie groups and Lie algebroids (see \cite{Chu, Tang1} and the references therein for surveys of these directions).

Classifying Rota--Baxter operators on particular algebras stands as a central concern
of the theory, since an explicit classification both illuminates the operator's role
as a noncommutative analogue of integration by parts and provides the raw data needed
to build the associated dendriform and pre-Lie structures on the algebra. Significant
progress has been made for low-dimensional cases: Rota--Baxter operators on $2$- and
$3$-dimensional algebras have been thoroughly investigated in \cite{An, Li, Guo2}. For the second-order full matrix algebra over the complex field, with weight $0$, explicit classifications were derived using both standard computational methods and Gr\"obner basis techniques in \cite{Tang2}, and
the classification of Rota--Baxter operators on $\mathfrak{sl}_2(\mathbb{C})$ has
likewise received considerable attention \cite{Gubarev}. For unital algebras more
generally, it has been shown that every Rota--Baxter operator of nonzero weight on
the Grassmann algebra over a field of characteristic zero is a \emph{splitting
operator} — that is, a projection onto a subalgebra along a complementary subalgebra \cite{Gubarev, Rota1969}. More recently, Rota--Baxter
operators of arbitrary weight on the split semiquaternion algebra have been fully
classified by Chen and Deng \cite{chen}; see also \cite{Gubarev} for a broader survey
of classification techniques for Rota--Baxter operators on low-dimensional and
matrix-type algebras. These results motivate the natural next step of classifying
Rota--Baxter operators, of arbitrary weight, on the remaining $4$-dimensional
quaternion-type algebras — an attempt we pursue here for the degenerate case of generalized quaternion
algebra, which is the \textit{Grassmann} algebra $\bigwedge(\mathbb{R}^2)$. The results here for $\bigwedge(\mathbb{R}^2)$ serve as an independent classification of intrinsic interest for a well-known
$4$-dimensional nilpotent-graded algebra. Our results thus contribute to the growing body of literature on operator structures on low-dimensional algebras, and situate $\bigwedge(\mathbb{R}^2)$ as a natural companion case to existing classifications on quaternion-type algebras.

While the classification of the degenerate case (with vanishing parameters) obtained in this work provides a crucial baseline for the general theory on generalized quaternion algebras, it simultaneously highlights the substantially increased intricacy inherent to the non-degenerate setting (where the parameters are nonzero). Unlike the degenerate case, the non-degenerate regime exhibits a significantly richer landscape of Rota–Baxter operator structures, requiring a more delicate orbit analysis and grappling with additional algebraic constraints. Consequently, our present treatment not only completes the degenerate picture but also clarifies why the full non-degenerate classification remains a considerably more sophisticated—and computationally demanding—algebraic undertaking.


\section{Preliminaries}

\subsection{The Grassmann Algebra \texorpdfstring{$\bigwedge(\mathbb{R}^2)$}{Λ(R2)}}

The Grassmann algebra, also known as the \emph{exterior algebra}, is a fundamental algebraic structure that encodes the notion of oriented area and volume in a coordinate-free manner. It was introduced by Hermann Grassmann $(1809-1877)$ in his $1844$ treatise \emph{Die lineale Ausdehnungslehre} \cite{Grassmann1844} and has since become an essential tool in multilinear algebra, differential geometry, and mathematical physics; see e.g.\ \cite{Bourbaki, Marcus, Sternberg} for modern treatments.

Let $V = \mathbb{R}^2$ be the real vector space with standard basis $\{e_1, e_2\}$.
The exterior algebra $\bigwedge(V) = \bigwedge(\mathbb{R}^2)$ is the quotient of the
tensor algebra $T(V) = \bigoplus_{k \ge 0} V^{\otimes k}$ by the two-sided ideal $I$
generated by all elements of the form $v \otimes v$ for $v \in V$:
\[
\bigwedge(V) \;=\; T(V)/I .
\]
The induced product, denoted $\wedge$, is called the \emph{wedge product}. The
defining relation $v \wedge v = 0$ for all $v \in V$, together with bilinearity,
forces the \emph{anticommutativity}
\[
v \wedge w = -\,w \wedge v \qquad \text{for all } v, w \in V.
\]
Since $\dim V = 2$, the algebra decomposes into homogeneous components
\[
\bigwedge(\mathbb{R}^2) \;=\; \bigwedge{}^0(V) \oplus \bigwedge{}^1(V) \oplus \bigwedge{}^2(V),
\qquad
\bigwedge{}^0(V) \cong \mathbb{R}, \quad
\bigwedge{}^1(V) \cong V, \quad
\bigwedge{}^2(V) \cong \mathbb{R},
\]
so that $\dim_{\mathbb{R}} \bigwedge(\mathbb{R}^2) = 2^2 = 4$. As a basis for this
$4$-dimensional algebra we take
\[
\{\, e_0 := 1,\; e_1,\; e_2,\; e_{3} := e_1 \wedge e_2 \,\}.
\]
The multiplication is determined by
\[
e_1^2 = 0, \qquad e_2^2 = 0, \qquad e_1 \wedge e_2 = -\,e_2 \wedge e_1 = e_{3},
\]
with $1$ acting as the multiplicative identity, and it consequently satisfies
$e_i \wedge e_{3} = e_{3} \wedge e_i = 0$ for $i=1,2$ and $e_{3}\wedge e_{3}=0$.
Explicitly, the multiplication table of $\bigwedge(\mathbb{R}^2)$ with respect to
the basis $\{e_0, e_1, e_2, e_{12}\}$ is
\[
\begin{array}{c|cccc}
\wedge & e_0 & e_1 & e_2 & e_{3} \\ \hline
e_0    & e_0 & e_1 & e_2 & e_{3} \\
e_1    & e_1 & 0   & e_{3} & 0 \\
e_2    & e_2 & -e_{3} & 0 & 0 \\
e_{3} & e_{3} & 0 & 0 & 0
\end{array}
\]

Thus $\bigwedge(\mathbb{R}^2)$ is a unital, associative, $\mathbb{Z}_2$-graded
(super)commutative algebra that is neither commutative nor a division algebra: it is
graded as $\bigwedge(\mathbb{R}^2) = A_{\bar 0} \oplus A_{\bar 1}$ with
$A_{\bar 0} = \operatorname{span}\{e_0, e_{3}\}$ and
$A_{\bar 1} = \operatorname{span}\{e_1, e_2\}$, and elements of odd degree
anticommute while elements of even degree commute. Every element of
$A_{\bar 1} \oplus \operatorname{span}\{e_{3}\}$ is nilpotent; $e_{3}$ is the
highest-degree (volume) element and squares to zero. The element $e_{3}$ spans the
socle of the algebra and generates a $1$-dimensional ideal contained in the center
\[
Z\big(\bigwedge(\mathbb{R}^2)\big) = \operatorname{span}\{e_0, e_{3}\}.
\]
The addition for arbitrary two elements is given by
\[
\sum_{i=0}^{3}x_ie_i+\sum_{i=0}^{3}y_ie_i
=
\sum_{i=0}^{3}(x_i+y_i)e_i.
\]

The multiplication for arbitrary two elements is given by
\begin{align*}
	&(x_0e_0+x_1e_1+x_2e_2+x_3e_3)
	(y_0e_0+y_1e_1+y_2e_2+y_3e_3)\\
    &=(x_0y_0)e_0+(x_0y_1+x_1y_0)e_1+(x_0y_2+x_2y_0)e_2+(x_0y_3+x_1y_2-x_2y_1+x_3y_0)e_3.
\end{align*}
One can easily check that $\bigwedge(\mathbb{R}^2)$ with the above addition and multiplication is an \textit{associative} algebra. We call $\bigwedge(\mathbb{R}^2)$ as the \textit{exterior algebra} or \textit{Grassmann algebra} of a vector space $V$. For a complete exposition of these technical details, we defer to the definitive treatises of \cite{Bourbaki, Marcus}.

\begin{remark}[Degeneration of generalized quaternion algebras]
The algebra $\bigwedge(\mathbb{R}^2)$ arises naturally as a \emph{degenerate case}
of the generalized quaternion algebras. Recall that, for $a, b \in F^{\times}$, the
generalized quaternion algebra $\left(\dfrac{a,b}{F}\right)$ over a field $F$ is the
associative unital algebra generated by $i, j$ subject to
\[
i^2 = a, \qquad j^2 = b, \qquad ij = -ji =: k
\]
(see \cite{Lam, Voight} for the general theory). Formally setting $a = b = 0$
collapses this family precisely to $\bigwedge(F^2)$, via the identification
$i \mapsto e_1$, $j \mapsto e_2$, $k \mapsto e_{3}$; over $F = \mathbb{R}$ this gives
$\bigwedge(\mathbb{R}^2) \cong \left(\dfrac{0,0}{\mathbb{R}}\right)$. In this sense
$\bigwedge(\mathbb{R}^2)$ is the ``boundary'' member of the quaternion family: it
retains the $4$-dimensional structure and the anticommutation relation $ij=-ji$, but
loses the division-algebra property, since $i, j, k$ become nilpotent rather than
square roots of nonzero scalars. This perspective places the study of Rota--Baxter
operators on $\bigwedge(\mathbb{R}^2)$ within the broader context of operator theory
on quaternion-type algebras.
\end{remark}

\subsection{Rota--Baxter Operators}


\begin{definition} (\cite{GuoKellerYau})
Let $A$ be an (not necessarily associative) algebra over a field $\mathbb{K}$ and
let $\lambda \in \mathbb{K}$. A linear map $\mathcal{P} : A \to A$ is called a
\emph{Rota--Baxter operator of weight $\lambda$} if
\[
\mathcal{P}(x) \mathcal{P}(y) = \mathcal{P}\big( \mathcal{P}(x) y + x \mathcal{P}(y) + \lambda\, xy \big) \qquad \text{for all } x, y \in A.
\]
When $\lambda = 0$, $\mathcal{P}$ is called a Rota--Baxter operator of \emph{weight zero}. Thus, the pair $(A,\mathcal{P})$ constitutes a \emph{Rota--Baxter algebra of weight $\lambda$}.
\end{definition}


We adopt the following conventions: $\mathbb{R}$ is the field of real numbers; all linear maps are over $\mathbb{R}$; and given a matrix $M$, $M^T$ denotes the transpose of $M$, and
$$
M(4) =
\begin{pmatrix}
M & 0 & 0 & 0 \\
0 & M & 0 & 0 \\
0 & 0 & M & 0 \\
0 & 0 & 0 & M
\end{pmatrix}
.
$$

\section{The Rota-Baxter operators on \texorpdfstring{$\bigwedge(\mathbb{R}^2)$}{Λ(R2)}}
Let $\mathcal{P}$ be a linear transformation on \texorpdfstring{$\bigwedge(\mathbb{R}^2)$}{Λ(R2)}, and
\[
P = \begin{pmatrix}
p_{11} & p_{12} & p_{13} & p_{14} \\
p_{21} & p_{22} & p_{23} & p_{24} \\
p_{31} & p_{32} & p_{33} & p_{34} \\
p_{41} & p_{42} & p_{43} & p_{44}
\end{pmatrix}
= (\eta_0, \eta_1, \eta_2, \eta_3),
\]
the matrix of $\mathcal{P}$ with respect to the basis $\{e_0, e_1, e_2, e_3\}$. Notice easily that $\eta_i$ is the coordinate of $\mathcal{P}$ with respect to the basis $\{e_0, e_1, e_2, e_3\}$. Thus we have
\[
\mathcal{P}(e_i) = (e_0, e_1, e_2, e_3)\eta_i = \eta_i^T
\begin{pmatrix}
e_0 \\
e_1 \\
e_2 \\
e_3
\end{pmatrix}.
\]

\bigskip

\begin{lemma}\label{lem1}
Let $\mathcal{P}$ be a linear transformation on \texorpdfstring{$\bigwedge(\mathbb{R}^2)$}{Λ(R2)}. The following equation holds:
\begin{equation}\label{eq}
    \mathcal{P}(e_i)\mathcal{P}(e_j) = \eta_i^T C \eta_j(4)
    \begin{pmatrix}
    e_0 \\
    e_1 \\
    e_2 \\
    e_3
    \end{pmatrix},
\end{equation}
where
\[
C= \left(
\begin{array} {cccccccccccccccc}
	1 & 0 & 0 & 0 & 0 & 1 & 0 & 0 & 0 & 0 & 1 & 0 & 0 & 0 & 0 & 1\\
	0 & 0 & 0 & 0 & 1 & 0 &  0 & 0 & 0 & 0 & 0 & 0 & 0 & 0 & 1 & 0\\
	0 & 0 & 0 & 0 & 0 & 0 & 0 & 0 & 1 & 0 & 0 & 0 & 0 & -1 & 0 & 0\\
	0 & 0 & 0 & 0 & 0 & 0 & 0 & 0 & 0 & 0 & 0 & 0 & 1 & 0 & 0 & 0
	\end {array}
	\right).
	\]
\end{lemma}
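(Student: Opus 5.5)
The identity in Lemma \ref{lem1} is a bookkeeping statement: it rewrites the bilinear multiplication of $\bigwedge(\mathbb{R}^2)$ in matrix form. My plan is to expand both sides in coordinates and compare them entry by entry.

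Write $\eta_i=(p_{1,i+1},p_{2,i+1},p_{3,i+1},p_{4,i+1})^T$, so that $\mathcal{P}(e_i)=\sum_{k=0}^3 p_{k+1,i+1}e_k$. By bilinearity and the multiplication formula for arbitrary elements given in the preliminaries,
\[
\mathcal{P}(e_i)\mathcal{P}(e_j)=\sum_{k,l=0}^{3}p_{k+1,i+1}\,p_{l+1,j+1}\,e_ke_l .
\]
Each product $e_ke_l$ is read off the multiplication table. It is $e_0$ only for $(k,l)=(0,0)$. It is $e_1$ for $(0,1),(1,0)$ and $e_2$ for $(0,2),(2,0)$. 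It is $e_3$ for $(0,3),(3,0),(1,2)$, and $-e_3$ for $(2,1)$. In all other cases it is zero.

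Next I will unpack the right-hand side. Here $\eta_j(4)$ is the $16\times 4$ block-diagonal matrix $\mathrm{diag}(\eta_j,\eta_j,\eta_j,\eta_j)$. Hence $\eta_i^TC\eta_j(4)$ is a $1\times4$ row vector whose $m$-th entry is
\[
\sum_{k=0}^{3}(\eta_i)_k\sum_{l=0}^3 C_{k+1,\,4k'+l+1}\ldots
\]
It is cleaner to index by $m$: the $m$-th entry equals $\eta_i^T B_m \eta_j$. Here $B_m$ is the $4\times 4$ matrix obtained by cutting the $16$ columns of $C$ into four consecutive blocks of $4$ and taking the $m$-th block. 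The block structure of $\eta_j(4)$ means precisely that column block $m$ of $C$ is multiplied by $\eta_j$ and contributes to output coordinate $m$.

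So the $(k,l)$ entry of the $m$-th block of $C$ must equal the $e_m$-coefficient of $e_ke_l$, that is, the structure constant $c_{kl}^m$. I will verify this against the four blocks of $C$, which simply transcribes the table:
\begin{itemize}
\item Block $0$ has the single entry $(0,0)=1$.
\item Block $1$ has entries $(0,1)$ and $(1,0)$ equal to $1$.
\item Block $2$ has $(0,2)=(2,0)=1$.
\item Block $3$ has $(0,3)=(3,0)=(1,2)=1$ and $(2,1)=-1$.
\end{itemize}
Then $\eta_i^TB_m\eta_j=\sum_{k,l}p_{k+1,i+1}p_{l+1,j+1}c^m_{kl}$, which is the $e_m$-coefficient of the left-hand side. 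Multiplying the row vector by the column $(e_0,e_1,e_2,e_3)^T$ then reproduces $\mathcal{P}(e_i)\mathcal{P}(e_j)$.

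The only real obstacle is the indexing convention. I must confirm how $C$ pairs row and column indices within each block, and settle which of $(1,2)$ and $(2,1)$ carries the $-1$, consistently with the sign convention $e_1e_2=e_3$.

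If the printed $C$ turns out to be arranged by rows rather than by blocks, I will reinterpret the product as $\sum_m(\eta_i^T C)_{\text{block}}\ldots$ and recheck. Either way, the verification reduces to matching sixteen structure constants.
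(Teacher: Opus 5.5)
Your proposal is correct and follows essentially the same route as the paper. The paper writes $\mathcal{P}(e_i)\mathcal{P}(e_j)=\eta_i^T(e_ke_l)_{k,l}\,\eta_j$, splits the multiplication table as $\sum_m M_m e_m$ with $M_m$ the structure-constant matrices, and notes that $C=(M_0,M_1,M_2,M_3)$; this is exactly your identification of the column blocks $B_m$ of $C$ with $c^m_{kl}$. Your block-by-block reading of the printed $C$ is right, including $+1$ at $(1,2)$ and $-1$ at $(2,1)$ in $B_3$, so the hedges in your last two paragraphs are unnecessary.
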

\begin{proof}
For any $i$ and $j$, since
\[
\mathcal{P}(e_i)\mathcal{P}(e_j) = \eta_i^T
\begin{pmatrix}
e_0 \\
e_1 \\
e_2 \\
e_3
\end{pmatrix}
(e_0, e_1, e_2, e_3)\eta_j
= \eta_i^T
\begin{pmatrix}
e_0 & e_1 & e_2 & e_3 \\
e_1 & 0 &  e_3 & 0  \\
e_2 & -e_3 & 0 & 0 \\
e_3 & 0 & 0  & 0
\end{pmatrix}
\eta_j
\]
\[
= \eta_i^T
\begin{pmatrix}
1 & 0 & 0 & 0 \\
0 & 0 & 0 & 0 \\
0 & 0 & 0 & 0 \\
0 & 0 & 0 & 0
\end{pmatrix}
\eta_j e_0 + \eta_i^T
\begin{pmatrix}
0 & 1 & 0 & 0 \\
1 & 0 & 0 & 0 \\
0 & 0 & 0 & 0 \\
0 & 0 & 0  & 0
\end{pmatrix}
\eta_j e_1
\]
\[
+ \eta_i^T
\begin{pmatrix}
0 & 0 & 1 & 0 \\
0 & 0 & 0 & 0 \\
1 & 0 & 0 & 0 \\
0& 0 & 0 & 0
\end{pmatrix}
\eta_j e_2 + \eta_i^T
\begin{pmatrix}
0 & 0 & 0 & 1 \\
0 & 0 & 1 & 0 \\
0 & -1 & 0 & 0 \\
1 & 0 & 0 & 0
\end{pmatrix}
\eta_j e_3
\]
\[
= \eta_i^T C
\begin{pmatrix}
\eta_j &  &  & \\
& \eta_j &  & \\
&  & \eta_j & \\
&  & & \eta_j
\end{pmatrix}
\begin{pmatrix}
e_0 \\
e_1 \\
e_2 \\
e_3
\end{pmatrix},
\]
so this yields the equation (\ref{eq}).
\end{proof}

The proof of the following lemma is straightforward.
\begin{lemma}\label{lem2}
Let $\mathcal{P}$ be a linear transformation on \texorpdfstring{$\bigwedge(\mathbb{R}^2)$}{Λ(R2)}. The following equations
hold:
\begin{align*}
e_i\mathcal{P}(e_j)
	&=
	\eta_j^{T}E_i^{T}
	\begin{pmatrix}
		e_0\\
		e_1\\
		e_2\\
		e_3
	\end{pmatrix},
	\qquad
	\mathcal{P}(e_i)e_1=
	\eta_i^{T}
	\begin{pmatrix}
		0 & 1 & 0 & 0\\
        0 & 0 & 0 & 0\\
		0 & 0 & 0 & -1\\
		0 & 0 & 0 & 0
	\end{pmatrix}
	\begin{pmatrix}
		e_0\\
		e_1\\
		e_2\\
		e_3
	\end{pmatrix},
	\\[2ex]
	\mathcal{P}(e_i)e_2
	&=
	\eta_i^{T}
	\begin{pmatrix}
		0 & 0 & 1 & 0\\
		0 & 0 & 0 & 1\\
		0 & 0 & 0 & 0\\
		0 & 0 & 0 & 0
	\end{pmatrix}
	\begin{pmatrix}
		e_0\\
		e_1\\
		e_2\\
		e_3
	\end{pmatrix},
	\qquad
	\mathcal{P}(e_i)e_3=
	\eta_i^{T}
	\begin{pmatrix}
		0 & 0 & 0 & 1\\
		0 & 0 & 0 & 0\\
		0 & 0 & 0 & 0\\
		0 & 0 & 0 & 0
	\end{pmatrix}
	\begin{pmatrix}
		e_0\\
		e_1\\
		e_2\\
		e_3
	\end{pmatrix},
\end{align*}
where
\begin{align*}
	I_0 &=
	\begin{pmatrix}
		1 & 0 & 0 & 0 \\
		0 & 1 & 0 & 0 \\
		0 & 0 & 1 & 0 \\
		0 & 0 & 0 & 1
	\end{pmatrix}, &
	I_1 &=
	\begin{pmatrix}
		0 & 0 & 0 & 0 \\
		1 & 0 & 0 & 0 \\
		0 & 0 & 0 & 0 \\
		0 & 0 & 1 & 0
	\end{pmatrix}, \\
	I_2 &=
	\begin{pmatrix}
		0 & 0 & 0 & 0 \\
		0 & 0 & 0 & 0\\
		1 & 0 & 0 & 0 \\
		0 & -1 & 0 & 0
	\end{pmatrix}, &
	I_3 &=
	\begin{pmatrix}
		0 & 0 & 0 & 0 \\
		0 & 0 & 0 & 0 \\
		0 & 0 & 0 & 0 \\
		1 & 0 & 0 & 0
	\end{pmatrix}.
\end{align*}
\end{lemma}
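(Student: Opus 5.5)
The plan is to reduce all four claims to one bookkeeping principle: left or right multiplication by a fixed basis element is a linear map on $\bigwedge(\mathbb{R}^2)$, so it is given by a $4\times 4$ matrix acting on coordinate vectors. Since $\mathcal{P}(e_j)=(e_0,e_1,e_2,e_3)\eta_j$, we have $e_i\mathcal{P}(e_j)=(e_0,e_1,e_2,e_3)L_i\eta_j$, where $L_i$ is the matrix of $x\mapsto e_ix$. Transposing the scalar expression gives $e_i\mathcal{P}(e_j)=\eta_j^TL_i^T(e_0,e_1,e_2,e_3)^T$. The first formula then amounts to checking that $L_i=I_i$. I read the $E_i$ in the statement as the matrices $I_i$ defined below it. For right multiplication $x\mapsto xe_k$ I will check directly that $xe_k=\eta^TM_k(e_0,e_1,e_2,e_3)^T$, where $M_k$ is the displayed matrix.

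For the left multiplications I use the explicit product formula from the preliminaries with $x=e_i$ and $y=\sum y_me_m$:
\begin{align*}
e_0y&=y, & e_1y&=y_0e_1+y_2e_3,\\
e_2y&=y_0e_2-y_1e_3, & e_3y&=y_0e_3.
\end{align*}
Reading off coordinates, $L_0$ is the identity $I_0$. The matrix $L_1$ has entries $1$ in positions $(2,1)$ and $(4,3)$, which is $I_1$. The matrix $L_2$ has $1$ at $(3,1)$ and $-1$ at $(4,2)$, which is $I_2$. The matrix $L_3$ has $1$ at $(4,1)$, which is $I_3$. This proves the first identity.

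For the right multiplications I take $x=\sum x_me_m$ and use the same formula:
\begin{align*}
xe_1&=x_0e_1-x_2e_3, & xe_2&=x_0e_2+x_1e_3, & xe_3&=x_0e_3.
\end{align*}
Writing $xe_k=\sum_{a,b}x_a(M_k)_{ab}e_b$, the entry $(M_k)_{ab}$ is the $e_b$-coefficient of $e_ae_k$ (indices running over $0,\dots,3$). This gives $(M_1)_{01}=1$ and $(M_1)_{23}=-1$; then $(M_2)_{02}=1$ and $(M_2)_{13}=1$; and finally $(M_3)_{03}=1$, with all other entries zero. These are exactly the displayed matrices. Substituting $x=\mathcal{P}(e_i)$, whose coordinate vector is $\eta_i$, yields the remaining three formulas.

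The argument is a direct verification with no real obstacle. The only point that needs care is the transpose convention: left multiplication gives $L_i^T$, while the right-multiplication matrices are written in the ``row-index = input coordinate'' form and so appear untransposed. The other thing to watch is the sign coming from $e_2e_1=-e_3$, which produces the single $-1$ entries in $I_2$ and in the matrix for $\mathcal{P}(e_i)e_1$.
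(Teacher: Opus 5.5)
Your verification is correct. It is the direct coordinate check the paper intends when it calls the proof ``straightforward'' (the paper supplies no further argument), and your reading of the undefined $E_i$ as the matrices $I_i$ matches how the lemma is used in the proof of Theorem 3.1; all entries check out, including both $-1$'s coming from $e_2e_1=-e_3$.
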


We now state our main theorem, which is a consequence of Lemmas \ref{lem1} and \ref{lem2}:

\begin{theorem}
$\mathcal{P}$ is a Rota-Baxter operator of weight $\lambda$ on \texorpdfstring{$\bigwedge(\mathbb{R}^2)$}{Λ(R2)} if and only if the column vectors $\eta_i$ of $P$ and $P$ satisfy the following relations:
\begin{align*}
	\eta_0(4)^{T}C^TP &= P(I_0,I_1,I_2,I_3)\eta_0(4)+P^2+\lambda P,\\
    \eta_1(4)^{T}C^TP &= P(I_0,I_1,I_2,I_3)\eta_1(4)+P
	\begin{pmatrix}
		0&1&0&0\\
		0&0&0&0\\
		0&0&0&-1\\
		0&0&0&0
	\end{pmatrix}^{T}P+\lambda P
	\begin{pmatrix}
		0&1&0&0\\
		0&0&0&0\\
		0&0&0&-1\\
		0&0&0&0
	\end{pmatrix}^{T},\\
\end{align*}
\begin{align*}
	\eta_2(4)^{T}C^TP &= P(I_0,I_1,I_2,I_3)\eta_2(4)+P
			\begin{pmatrix}
				0&0&1&0\\
				0&0&0&1\\
				0&0&0&0\\
				0&0&0&0
			\end{pmatrix}^{T}
			P+\lambda P
			\begin{pmatrix}
				0&0&1&0\\
				0&0&0&1\\
				0&0&0&0\\
				0&0&0&0
			\end{pmatrix}^{T},
			\\
	\eta_3(4)^TC^TP &= P(I_0,I_1,I_2,I_3)\eta_3(4)+P
			\begin{pmatrix}
		    0&0&0&1\\
			0&0&0&0\\
			0&0&0&0\\
            0&0&0&0	
			\end{pmatrix}
			P+\lambda P
			\begin{pmatrix}
				0&0&0&1\\
				0&0&0&0\\
				0&0&0&0\\
				0&0&0&0
			\end{pmatrix}^{T}.
			 \end{align*}
\end{theorem}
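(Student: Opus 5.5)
By bilinearity of both sides of the Rota--Baxter identity, $\mathcal{P}$ is a Rota--Baxter operator of weight $\lambda$ if and only if
\[
\mathcal{P}(e_i)\mathcal{P}(e_j)=\mathcal{P}\big(\mathcal{P}(e_i)e_j+e_i\mathcal{P}(e_j)+\lambda e_ie_j\big)
\]
holds for all $0\le i,j\le 3$. The plan is to fix the right-hand index $j$ and let $i$ run over $0,1,2,3$. This turns the sixteen vector identities into four $4\times 4$ matrix identities, one for each $j$. These four identities are the four displayed relations, indexed by the column $\eta_j$ that appears in them.

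\textbf{Step 1: the left-hand side.} Lemma~\ref{lem1} gives the coordinate row vector of $\mathcal{P}(e_i)\mathcal{P}(e_j)$ as $\eta_i^T C\eta_j(4)$. Stacking these for $i=0,\dots,3$ gives the matrix $P^TC\eta_j(4)$, whose $i$-th row is the coordinate row of the $i$-th product. To pass to the column convention used for $P$, where coordinates of $\mathcal{P}(e_i)$ are the $i$-th column, I will transpose. The coordinate columns then form $\eta_j(4)^TC^TP$, which is the left side of each relation. The exact index bookkeeping (whether the varying index is $i$ or $j$) has to be matched to the paper's convention. The key point is that the fixed column $\eta_j$ enters through $\eta_j(4)$.

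\textbf{Step 2: the right-hand side.} I will treat its three terms separately, using Lemma~\ref{lem2}.
\begin{itemize}
\item The term $e_i\mathcal{P}(e_j)$ has coordinates $E_i\eta_j$, where $E_i$ are the matrices $I_i$. Collecting over $i$ gives the block row $(I_0,I_1,I_2,I_3)\eta_j(4)$.
\item The term $\mathcal{P}(e_i)e_j$ is right multiplication by $e_j$ applied to $\eta_i$. Its coordinates are $R_j^T\eta_i$, with $R_j$ the matrices in Lemma~\ref{lem2}; for $j=0$, $R_0$ is the identity. Collecting over $i$ gives $R_j^TP$.
\item The term $\lambda e_ie_j$ is $\lambda$ times the column of $e_ie_j$, which is again $R_j^T$ applied to the standard basis vectors. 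This gives $\lambda R_j^T$.
\end{itemize}
Applying $\mathcal{P}$ then left-multiplies by $P$. The right side is therefore
\[
P(I_0,I_1,I_2,I_3)\eta_j(4)+PR_j^TP+\lambda PR_j^T,
\]
which for $j=0$ reduces to $P(\cdots)+P^2+\lambda P$. Equating with Step 1 gives each stated relation. The converse is immediate, since each step is an equivalence of coordinate identities.

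\textbf{Main obstacle.} I expect the only real difficulty to be bookkeeping. Transposes and row-versus-column conventions must be handled consistently, and in particular one must check the exact form of $R_3$, which the statement writes without a transpose in the quadratic term. I would verify one entry of each relation, for example $i=1$, $j=2$, where $e_1e_2=e_3$, against the direct product formula to confirm that the conventions agree.
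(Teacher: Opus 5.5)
Your proposal is correct and follows the paper's own proof. Both reduce by bilinearity to the sixteen basis identities, fix $j$ and let $i$ index the columns, take $\eta_j(4)^TC^TP$ from Lemma~\ref{lem1}, and take the three right-hand terms $P(I_0,I_1,I_2,I_3)\eta_j(4)$, $PR_j^TP$ and $\lambda PR_j^T$ from Lemma~\ref{lem2}.

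On the obstacle you flagged: your derivation correctly gives $PR_3^TP$ in the fourth relation, and the paper's row-form equations $\eta_i^TC\eta_3(4)=\eta_3^TI_i^TP^T+\eta_i^TR_3P^T+\cdots$ transpose to the same thing. So the missing transpose in the printed statement is a misprint. As printed, with $PR_3P$, the relation already fails for the Rota--Baxter operator $\mathcal{P}=-\lambda\,\mathrm{id}$ with $\lambda\neq 0$. You should therefore state the corrected relation explicitly rather than leave it as something to check.
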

\begin{proof}
	Using Lemma \ref{lem2}, we obtain
	\begin{align*}
	\mathcal{P}(\mathcal{P}(e_i)e_j) = \eta_j^T I_i^T p^T
	\begin{pmatrix}
		e_0 \\
		e_1 \\
		e_2 \\
		e_3
	\end{pmatrix},\\
	\mathcal{P}\bigl(\mathcal{P}(e_i)e_1\bigr)
	&=
	\eta_i^{T}
	\begin{pmatrix}
	0 & 1 & 0 & 0\\
	0 & 0 & 0 & 0\\
	0 & 0 & 0 & -1\\
	0 & 0 & 0 & 0	
	\end{pmatrix}
	P^{T}
	\begin{pmatrix}
		e_0\\
		e_1\\
		e_2\\
		e_3
	\end{pmatrix},
	\\[2ex]
	\mathcal{P}\bigl(\mathcal{P}(e_i)e_2\bigr)
	&=
	\eta_i^{T}
	\begin{pmatrix}
		0 & 0 & 1 & 0\\
	0 & 0 & 0 & 1\\
	0 & 0 & 0 & 0\\
	0 & 0 & 0 & 0
	\end{pmatrix}
	P^{T}
	\begin{pmatrix}
		e_0\\
		e_1\\
		e_2\\
		e_3
	\end{pmatrix},
\end{align*}
\begin{align*}
	\mathcal{P}\bigl(\mathcal{P}(e_i)e_3\bigr)
	&=
	\eta_i^{T}
	\begin{pmatrix}
	0 & 0 & 0 & 1\\
	0 & 0 & 0 & 0\\
	0 & 0 & 0 & 0\\
	0 & 0 & 0 & 0	
	\end{pmatrix}
	P^{T}
	\begin{pmatrix}
		e_0\\
		e_1\\
		e_2\\
		e_3
	\end{pmatrix}.
	\end{align*}		
$\mathcal{P}$ is a Rota-Baxter operator with weight $\lambda$ if and only if for any $i, j = 0, 1, 2, 3$, the following equations holds:
\[
\mathcal{P}(e_i)\mathcal{P}(e_j) = \mathcal{P}\big(\mathcal{P}(e_i)e_j\big) + \mathcal{P}\big(e_i\mathcal{P}(e_j)\big) + \lambda \mathcal{P}(e_i e_j).
\]
For $j = 0$, we obtain from Lemma \ref{lem1} that
\[
\eta_i^T C\eta_0(4) = \eta_0^T I_i^T P^T + \eta_i^T P^T  + \lambda \eta_i^T.
\]
This amounts to the first equality of Theorem.\\
When $j=1$, Lemma \ref{lem1} gives
\begin{equation*}
\eta_0^T C \eta_1(4)= \eta_1^T I_0^T p^T+ \eta_0^T
\begin{pmatrix}
	0&1&0&0\\
    0&0&0&0\\
    0&0&0&-1\\
	0&0&0&0	
\end{pmatrix}
P^{T}
+ \lambda\eta_1^{T},
\end{equation*}
\begin{equation*}
	\eta_1^T C \eta_1(4)= \eta_1^T I_1^T p^T+ \eta_1^T
	\begin{pmatrix}
		0&1&0&0\\
		0&0&0&0\\
		0&0&0&-1\\
		0&0&0&0	
	\end{pmatrix}
	P^{T},
	\end{equation*}
	\begin{equation*}
	\eta_2^T C \eta_1(4)= \eta_1^T I_2^T p^T+ \eta_2^T
	\begin{pmatrix}
		0&1&0&0\\
		0&0&0&0\\
		0&0&0&-1\\
		0&0&0&0	
	\end{pmatrix}
	P^{T}
- \lambda\eta_3^{T},,
	\end{equation*}
    \begin{equation*}	
	   \eta_3^T C \eta_1(4)= \eta_1^T I_3^T p^T+  \eta_3^T
       \begin{pmatrix}
    	0&1&0&0\\
    	0&0&0&0\\
	      0&0&0&-1\\
     	0&0&0&0	
        \end{pmatrix}
         P^{T},
         \end{equation*}
	which is equivalent to the second equality in Theorem 3.1.\\
When $j=2$, \\
\begin{equation*}
	\eta_0^T C \eta_2(4)= \eta_2^T I_0^T p^T+ \eta_0^T
	\begin{pmatrix}
		0&0&1&0\\
		0&0&0&1\\
        0&0&0&0\\
		0&0&0&0	
	\end{pmatrix}
	P^{T}
	+ \lambda\eta_2^{T},
\end{equation*}
	\begin{equation*}
	\eta_1^T C \eta_2(4)= \eta_2^T I_1^T p^T+ \eta_1^T
	\begin{pmatrix}
		0&0&1&0\\
		0&0&0&1\\
		0&0&0&0\\
		0&0&0&0	
	\end{pmatrix}
	P^{T}
	+ \lambda\eta_3^{T},
     \end{equation*}
     \begin{equation*}
     \eta_2^T C \eta_2(4)= \eta_2^T I_2^T p^T+ \eta_2^T
     \begin{pmatrix}
     	0&0&1&0\\
     	0&0&0&1\\
     	0&0&0&0\\
     	0&0&0&0	
     \end{pmatrix}
     P^{T}
     \end{equation*},
   \begin{equation*}
   	\eta_3^T C \eta_2(4)= \eta_2^T I_3^T p^T+ \eta_3^T
   	\begin{pmatrix}
   		0&0&1&0\\
   		0&0&0&1\\
   		0&0&0&0\\
   		0&0&0&0	
   	\end{pmatrix}
   	P^{T},
   \end{equation*}
which is equivalent to the third equality in Theorem 3.1.\\
When $j=3$,\\
   \begin{equation*}
   		\eta_0^T C \eta_3(4)= \eta_3^T I_0^T p^T+ \eta_0^T
   	\begin{pmatrix}
   		0&0&0&1\\
   		0&0&0&0\\
   		0&0&0&0\\
   		0&0&0&0	
   	\end{pmatrix}
   	P^{T}
   	+ \lambda\eta_3^{T},
   \end{equation*}
   \begin{equation*}
   		\eta_1^T C \eta_3(4)= \eta_3^T I_1^T p^T+ \eta_1^T
   	\begin{pmatrix}
   		0&0&0&1\\
   		0&0&0&0\\
   		0&0&0&0\\
   		0&0&0&0	
   	\end{pmatrix}
   	P^{T},
   	\end{equation*}
   	\begin{equation*}
   			\eta_2^T C \eta_3(4)= \eta_3^T I_2^T p^T+ \eta_2^T
   		\begin{pmatrix}
   			0&0&0&1\\
   			0&0&0&0\\
   			0&0&0&0\\
   			0&0&0&0	
   		\end{pmatrix}
   		P^{T},
    \end{equation*}
	\begin{equation*}
   			\eta_3^T C \eta_3(4)= \eta_3^T I_3^T p^T+ \eta_3^T
   		\begin{pmatrix}
   			0&0&0&1\\
   			0&0&0&0\\
   			0&0&0&0\\
   			0&0&0&0	
   		\end{pmatrix}
   		P^{T},		
   	\end{equation*}
which is exactly the fourth equality in Theorem 3.1.
\end{proof}



\section{Rota-Baxter Operators with weight $0$ on \texorpdfstring{$\bigwedge(\mathbb{R}^2)$}{Λ(R2)}}

We begin by determining all $P$ which satisfy the matrix equations in Theorem 3.1. Using this theorem, the matrix system of equations reduces to the following system of equations:
\begin{align}
&& p_{11}^{2} + 2p_{12}p_{21} + 2p_{13}p_{31} + 2p_{14}p_{41} &= 0 \tag{c1} \\
&& 2p_{21}p_{22} + 2p_{23}p_{31} + 2p_{24}p_{41} &= 0 \tag{c2} \\
&& 2p_{21}p_{32} + 2p_{31}p_{33} + 2p_{34}p_{41} &= 0 \tag{c3} \\
&& 2p_{21}p_{42} + 2p_{31}p_{43} + 2p_{41}p_{44} &= 0 \tag{c4} \\
&& p_{11}p_{12} + p_{12}p_{22} + p_{13}p_{32} - p_{14}p_{31} + p_{14}p_{42} &= 0 \tag{c5} \\
&& p_{22}^{2} + p_{23}p_{32} - p_{24}p_{31} + p_{24}p_{42} &= 0 \tag{c6} \\
&& p_{22}p_{32} - p_{31}p_{34} + p_{32}p_{33} + p_{34}p_{42} &= 0 \tag{c7} \\
&& -p_{21}p_{32} + p_{22}p_{31} + p_{22}p_{42} - p_{31}p_{44} + p_{32}p_{43} + p_{42}p_{44} &= 0 \tag{c8} \\
&& p_{11}p_{13} + p_{12}p_{23} + p_{13}p_{33} + p_{14}p_{21} + p_{14}p_{43} &= 0 \tag{c9} \\
&& p_{21}p_{24} + p_{22}p_{23} + p_{23}p_{33} + p_{24}p_{43} &= 0 \tag{c10} \\
&& p_{21}p_{34} + p_{23}p_{32} + p_{33}^{2} + p_{34}p_{43} &= 0 \tag{c11} \\
&& -p_{21}p_{33} + p_{21}p_{44} + p_{23}p_{31} + p_{23}p_{42} + p_{33}p_{43} + p_{43}p_{44} &= 0 \tag{c12} \\
&& p_{11}p_{14} + p_{12}p_{24} + p_{13}p_{34} + p_{14}p_{44} &= 0 \tag{c13} \\
&& p_{22}p_{24} + p_{23}p_{34} + p_{24}p_{44} &= 0 \tag{c14} \\
&& p_{24}p_{32} + p_{33}p_{34} + p_{34}p_{44} &= 0 \tag{c15} \\
&& -p_{21}p_{34} + p_{24}p_{31} + p_{24}p_{42} + p_{34}p_{43} + p_{44}^{2} &= 0 \tag{c16} \\
&& p_{11}p_{12} + p_{12}p_{22} + p_{13}p_{32} + p_{14}p_{31} + p_{14}p_{42} &= 0 \tag{c17} \\
&& p_{22}^{2} + p_{23}p_{32} + p_{24}p_{31} + p_{24}p_{42} &= 0 \tag{c18} \\
&& p_{22}p_{32} + p_{31}p_{34} + p_{32}p_{33} + p_{34}p_{42} &= 0 \tag{c19} \\
&& p_{21}p_{32} - p_{22}p_{31} + p_{22}p_{42} + p_{31}p_{44} + p_{32}p_{43} + p_{42}p_{44} &= 0 \tag{c20} \\
&& p_{12}^{2} &= 0 \tag{c21} \\
&& p_{12}p_{13} + p_{14}p_{22} + p_{14}p_{33} &= 0 \tag{c22} \\
&& p_{22}p_{24} + p_{24}p_{33} &= 0 \tag{c23} \\
&& p_{22}p_{34} + p_{33}p_{34} &= 0 \tag{c24}\\
&& -p_{22}p_{33} + p_{22}p_{44} + p_{23}p_{32} + p_{33}p_{44} &= 0 \tag{c25}
\end{align}
\begin{align}
&& p_{12}p_{14} + p_{14}p_{34} &= 0 \tag{c26} \\
&& p_{34}^{2} &= 0 \tag{c27}\\
&& p_{24}p_{34} &= 0 \tag{c28} \\
&& -p_{22}p_{34} + p_{24}p_{32} + p_{34}p_{44} &= 0 \tag{c29} \\
&& p_{11}p_{13} + p_{12}p_{23} + p_{13}p_{33} - p_{14}p_{21} + p_{14}p_{43} &= 0 \tag{c30} \\
&& -p_{21}p_{24} + p_{22}p_{23} + p_{23}p_{33} + p_{24}p_{43} &= 0 \tag{c31} \\
&& -p_{21}p_{34} + p_{23}p_{32} + p_{33}^{2} + p_{34}p_{43} &= 0 \tag{c32} \\
&& p_{21}p_{33} - p_{21}p_{44} - p_{23}p_{31} + p_{23}p_{42} + p_{33}p_{43} + p_{43}p_{44} &= 0 \tag{c33} \\
&& -p_{12}p_{13} + p_{14}p_{22} + p_{14}p_{33} &= 0 \tag{c34} \\
&& p_{22}p_{24} + p_{24}p_{33} &= 0 \tag{c35} \\
&& p_{22}p_{34} + p_{33}p_{34} &= 0 \tag{c36} \\
&& -p_{22}p_{33} + p_{22}p_{44} + p_{23}p_{32} + p_{33}p_{44} &= 0 \tag{c37} \\
&& p_{13}^{2} &= 0 \tag{c38} \\
&& -p_{13}p_{14} + p_{14}p_{24} &= 0 \tag{c39} \\
&& p_{24}^{2} &= 0 \tag{c40} \\
&& p_{24}p_{34} &= 0 \tag{c41} \\
&& p_{23}p_{34} - p_{24}p_{33} + p_{24}p_{44} &= 0 \tag{c42} \\
&& p_{11}p_{14} + p_{12}p_{24} + p_{13}p_{34} + p_{14}p_{44} &= 0 \tag{c43} \\
&& p_{22}p_{24} + p_{23}p_{34} + p_{24}p_{44} &= 0 \tag{c44} \\
&& p_{24}p_{32} + p_{33}p_{34} + p_{34}p_{44} &= 0 \tag{c45} \\
&& p_{21}p_{34} - p_{24}p_{31} + p_{24}p_{42} + p_{34}p_{43} + p_{44}^{2} &= 0 \tag{c46} \\
&& -p_{12}p_{14} + p_{14}p_{34} &= 0 \tag{c47} \\
&& p_{24}p_{34} &= 0 \tag{c48}\\
&& p_{34}^{2} &= 0 \tag{c49} \\
&& -p_{22}p_{34} + p_{24}p_{32} + p_{34}p_{44} &= 0 \tag{c50}\\
&& p_{13}p_{14} + p_{14}p_{24} &= 0 \tag{c51} \\
&& p_{24}^{2} &= 0 \tag{c52} \\
&& p_{24}p_{34} &= 0 \tag{c53}
\end{align}
\begin{align}
&& p_{23}p_{34} - p_{24}p_{33} + p_{24}p_{44} &= 0 \tag{c54} \\
&& p_{14}^{2} &= 0 \tag{c55}
\end{align}

It is difficult to finding all solutions for the system of equations (c1)-(c55) by using manual methods. With the help of scientific computation software-Mathematica,
by inputting the order \textbf{Reduce[(c1), (c2), · · · , (c55), $\{p_{11}$, $p_{12}$, · · · , · · · , $p_{43}$, $p_{44}$\}]}, we can get the main result in this section.

\begin{theorem}
$\mathcal{P}$ is a Rota-Baxter operator of weight $0$ on \texorpdfstring{$\bigwedge(\mathbb{R}^2)$}{Λ(R2)} if and only if the matrix corresponding to $\mathcal{P}$ has the following form:
\begin{center}
$\begin{pmatrix}
0 & 0 & 0 & 0 \\
p_{21} & 0 & p_{23} & 0 \\
0 & 0 & 0 & 0 \\
p_{41} & 0 & p_{43} & 0
\end{pmatrix} \ (p_{43} \neq 0),$
~~~~~~~~
$\begin{pmatrix}
0 & 0 & 0 & 0 \\
0 & 0 & 0 & 0 \\
p_{31} & p_{32} & 0 & 0 \\
p_{41} & p_{42} & 0 & 0
\end{pmatrix} \ ( p_{32} p_{42} \neq 0),$
\newline
$\begin{pmatrix}
0 & 0 & 0 & 0 \\
p_{21} & 0 & p_{23} & 0 \\
0 & 0 & 0 & 0 \\
p_{41} & 0 & 0 & 0
\end{pmatrix},$
~~~~~~~~~~
$\begin{pmatrix}
0 & 0 & 0 & 0 \\
0 & 0 & 0 & 0 \\
p_{31} & 0 & 0 & 0 \\
p_{41} & p_{42} & 0 & 0
\end{pmatrix} \ ( p_{42} \neq 0),$
\newline
$\begin{pmatrix}
0 & 0 & 0 & 0 \\
0 & 0 & 0 & 0 \\
p_{31} & p_{32} & 0 & 0 \\
p_{41} & 0 & 0 & 0
\end{pmatrix} \ (p_{32} \neq 0),$
~~~~~~~~
$\begin{pmatrix}
0 & 0 & 0 & 0 \\
p_{21} & 0 & 0 & 0 \\
p_{31} & 0 & 0 & 0 \\
p_{41} & 0 & 0 & 0
\end{pmatrix} \ (p_{31} \neq 0),$
\newline
$\begin{pmatrix}
0 & 0 & 0 & 0 \\
-\dfrac{p_{31} p_{43}}{p_{42}} & 0 & 0 & 0 \\
p_{31} & 0 & 0 & 0 \\
p_{41} & p_{42} & p_{43} & 0
\end{pmatrix} \ (p_{31} \neq 0, p_{42} \neq 0,p_{43} \neq 0),$
$\begin{pmatrix}
0 & 0 & 0 & 0 \\
0 & 0 & 0 & 0 \\
0 & 0 & 0 & 0 \\
p_{41} & p_{42} & p_{43} & 0
\end{pmatrix} \ (p_{42} \neq 0,\;p_{43} \neq 0),$
\newline
$\begin{pmatrix}
0 & 0 & 0 & 0 \\
-\dfrac{p_{31} p_{43}}{p_{42}} & -p_{33} & -\dfrac{p_{33} p_{43}}{p_{42}} & 0 \\
p_{31} & \dfrac{p_{33} p_{42}}{p_{43}} & p_{33} & 0 \\
p_{41} & p_{42} & p_{43} & 0
\end{pmatrix} \ (p_{43} \neq 0,\; p_{32} \neq 0,\; p_{23} \neq 0,\; p_{22} \neq 0),$
$\begin{pmatrix}
0 & 0 & 0 & 0 \\
-\dfrac{p_{31} p_{33}}{p_{32}} & -p_{33} & -\dfrac{p_{33}^{2}}{p_{32}} & 0 \\
p_{31} & p_{32} & p_{33} & 0 \\
p_{41} & 0 & 0 & 0
\end{pmatrix} \ (p_{32} \neq 0,\; p_{23} \neq 0,\; p_{22}\neq 0).$
\end{center}
where $p_{21},p_{23},p_{31},p_{32}, p_{33},p_{41},p_{42}, p_{43}$ are free parameters.
\end{theorem}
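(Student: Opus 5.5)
The plan is to work directly with the system (c1)--(c55) obtained from Theorem 3.1 with $\lambda=0$. The system is quadratic but highly structured, so it can be reduced by hand in a few steps before any case split. This gives an independent check on the Mathematica output.

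\textbf{Step 1: the first row and the last column vanish.} From (c21), (c38), (c55) we get $p_{12}=p_{13}=p_{14}=0$, and then (c1) reduces to $p_{11}^2=0$, so the first row of $P$ is zero. In other words, the image of $\mathcal{P}$ lies in the augmentation ideal $\operatorname{span}\{e_1,e_2,e_3\}$. Next, (c40) and (c27) give $p_{24}=p_{34}=0$, and then (c16) becomes $p_{44}^2=0$. Hence the last column vanishes, i.e.\ $\mathcal{P}(e_3)=0$. Substituting these zeros, most of (c1)--(c55) become trivial, and only equations in the nine unknowns $p_{21},p_{22},p_{23},p_{31},p_{32},p_{33},p_{41},p_{42},p_{43}$ remain.

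\textbf{Step 2: the surviving relations.} Set
\[
N=\begin{pmatrix} p_{22}&p_{23}\\ p_{32}&p_{33}\end{pmatrix},\qquad u=(p_{21},p_{31})^T,\qquad w=(p_{42},p_{43}).
\]
The surviving equations are as follows.
\begin{itemize}
\item Equations (c6), (c11), (c7), (c10), (c25) read
\[
p_{22}^2+p_{23}p_{32}=0,\quad p_{33}^2+p_{23}p_{32}=0,\quad p_{32}(p_{22}+p_{33})=0,\quad p_{23}(p_{22}+p_{33})=0,\quad p_{23}p_{32}=p_{22}p_{33}.
\]
Together these say exactly that $N^2=0$ and $\operatorname{tr}N=0$.
\item Taking the difference and the sum of (c8) and (c20) gives $p_{22}p_{31}=p_{21}p_{32}$ and $p_{22}p_{42}+p_{32}p_{43}=0$.
\item Similarly, (c12) and (c33) give $p_{23}p_{31}=p_{21}p_{33}$ and $p_{23}p_{42}+p_{33}p_{43}=0$.
\item Equations (c2) and (c3) give $Nu=0$.
\end{itemize}
Thus the conditions become
\[
N^2=0,\qquad Nu=0,\qquad wN=0,\qquad \det\begin{pmatrix}p_{21}&p_{22}\\p_{31}&p_{32}\end{pmatrix}=\det\begin{pmatrix}p_{21}&p_{23}\\p_{31}&p_{33}\end{pmatrix}=0,
\]
while $p_{41}$ stays free. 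I would double-check that every remaining original equation is implied by these conditions.

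\textbf{Step 3: case split on $N$.} Since $N$ is a nilpotent $2\times 2$ matrix, either $N=0$ or $N$ has rank one.
\begin{itemize}
\item If $N=0$, all constraints disappear except those coupling $u$ and $w$, which must be read off from the remaining cross terms such as (c4). This yields the families with zero middle block. I would then subdivide according to which of $p_{21},p_{31},p_{42},p_{43}$ vanish, in order to match the listed cases (e.g.\ $p_{21}=-p_{31}p_{43}/p_{42}$).
\item If $N\neq 0$, write $N=ab^T$ with $b^Ta=0$. The conditions $Nu=0$ and $wN=0$ force $u\parallel a$ and $w\parallel b^T$. Parametrizing by $p_{32}$ or $p_{42}$ according to which is nonzero gives the last two families.
\end{itemize}
Conversely, substituting each listed matrix back into the reduced system verifies that it is a Rota--Baxter operator.

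\textbf{Main obstacle.} The main obstacle is Step 3's bookkeeping rather than the algebra: the listed families overlap and carry nonvanishing side conditions. I would first build the clean parametrization above, and only then partition it into the stated forms, checking each boundary case (for example $p_{43}=0$, or $p_{42}=0$) explicitly.
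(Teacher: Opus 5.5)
Your route is genuinely different from the paper's. The paper offers no argument beyond feeding (c1)--(c55) to Mathematica's Reduce; you reduce the system by hand. The individual equations you extract in Steps 1 and 2 are correct: I checked them against the Rota--Baxter identity. The first row and last column vanish, and the surviving relations are the ones you list.

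What your approach adds is an actual proof and a structural description. After Step 1 the system is equivalent to $N^2=0$, $Nu=0$, $wN=0$, $wu=0$, which is precisely $P^2=0$. So the weight-zero Rota--Baxter operators are exactly the maps with image in $\operatorname{span}\{e_1,e_2,e_3\}$, $\mathcal{P}(e_3)=0$ and $\mathcal{P}^2=0$. The solution set is then:
\begin{itemize}
\item either $N=ab^T$ with $b^Ta=0$, $u=\alpha a$, $w=\beta b^T$;
\item or $N=0$ with $wu=0$;
\end{itemize}
and $p_{41}$ is free in both cases. This makes the overlaps among the ten listed families transparent. Your two determinant conditions are redundant: for $N\neq 0$, $u\in\ker N=\operatorname{im}N$ is parallel to both columns, and for $N=0$ they are vacuous.

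Two things need fixing.

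First, your Step 2 summary drops (c4), $p_{21}p_{42}+p_{31}p_{43}=0$, i.e.\ $wu=0$. It is automatic when $N\neq 0$, but it is the only constraint when $N=0$. Your converse check (``substitute into the reduced system'') is only valid if (c4) is included.

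Second, the assignment of cases to families in Step 3 is wrong, in both directions.
\begin{itemize}
\item The last two families carry the side condition $p_{22}=-p_{33}\neq 0$, so they cover only rank-one $N$ with $p_{22}\neq0$. In that case $p_{23}p_{32}=-p_{22}^2\neq0$. The condition $u\in\ker N$ gives $p_{21}=-p_{31}p_{33}/p_{32}$, and $wN=0$ gives $p_{43}=p_{33}p_{42}/p_{32}$. Then $p_{42}=0$ yields the tenth family and $p_{42}\neq0$ the ninth.
\item When $p_{22}=0$ and $N\neq0$, either $N=p_{23}E_{12}$ or $N=p_{32}E_{21}$ (a single off-diagonal entry).
  \begin{itemize}
  \item If $N=p_{23}E_{12}$, then $u=(p_{21},0)^T$ and $w=(0,p_{43})$. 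So $p_{32}=p_{42}=0$, and your ``parametrize by whichever of $p_{32},p_{42}$ is nonzero'' is unavailable. These operators are the first and third families.
  \item If $N=p_{32}E_{21}$, then $u=(0,p_{31})^T$ and $w=(p_{42},0)$. This gives the fifth family and, for $p_{42}\neq0$, the second family, which is neither of the last two.
  \end{itemize}
\item Likewise, $N=0$ is not covered by the ``zero middle block'' families alone. Take $u=(p_{21},0)^T$ with $p_{21}\neq0$ and $w=(0,p_{43})$: this lies only in the first family at $p_{23}=0$. The case $N=0$ is covered by the fourth, sixth, seventh and eighth families together with the first and third at $p_{23}=0$.
\end{itemize}
With an extra split on whether $p_{22}=0$, your argument goes through and recovers exactly the stated list.
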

\section{Rota-Baxter Operators with non-zero weight $\lambda$ on \texorpdfstring{$\bigwedge(\mathbb{R}^2)$}{Λ(R2)}}
We again begin by determining all $P$ which satisfy the matrix equations in Theorem 3.1. Using this theorem, the matrix system of equations reduces to the following system of equations:
\begin{align}
&& p_{11}^{2} + 2p_{12}p_{21} + 2p_{13}p_{31} + 2p_{14}p_{41} + \lambda p_{11} &= 0 \tag{d1} \\
&& 2p_{21}p_{22} + 2p_{23}p_{31} + 2p_{24}p_{41} + \lambda p_{21} &= 0 \tag{d2} \\
&& 2p_{21}p_{32} + 2p_{31}p_{33} + 2p_{34}p_{41} + \lambda p_{31} &= 0 \tag{d3} \\
&& 2p_{21}p_{42} + 2p_{31}p_{43} + 2p_{41}p_{44} + \lambda p_{41} &= 0 \tag{d4} \\
&& p_{11}p_{12} + p_{12}p_{22} + p_{13}p_{32} - p_{14}p_{31} + p_{14}p_{42} + \lambda p_{12} &= 0 \tag{d5} \\
&& p_{22}^{2} + p_{23}p_{32} - p_{24}p_{31} + p_{24}p_{42} + \lambda p_{22} &= 0 \tag{d6} \\
&& p_{22}p_{32} - p_{31}p_{34} + p_{32}p_{33} + p_{34}p_{42} + \lambda p_{32} &= 0 \tag{d7} \\
&& -p_{21}p_{32} + p_{22}p_{31} + p_{22}p_{42} - p_{31}p_{44} + p_{32}p_{43} + p_{42}p_{44} + \lambda p_{42} &= 0 \tag{d8} \\
&& p_{11}p_{13} + p_{12}p_{23} + p_{13}p_{33} + p_{14}p_{21} + p_{14}p_{43} + \lambda p_{13} &= 0 \tag{d9} \\
&& p_{21}p_{24} + p_{22}p_{23} + p_{23}p_{33} + p_{24}p_{43} + \lambda p_{23} &= 0 \tag{d10} \\
&& p_{21}p_{34} + p_{23}p_{32} + p_{33}^{2} + p_{34}p_{43} + \lambda p_{33} &= 0 \tag{d11} \\
&& -p_{21}p_{33} + p_{21}p_{44} + p_{23}p_{31} + p_{23}p_{42} + p_{33}p_{43} + p_{43}p_{44} + \lambda p_{43} &= 0 \tag{d12} \\
&& p_{11}p_{14} + p_{12}p_{24} + p_{13}p_{34} + p_{14}p_{44} + \lambda p_{14} &= 0 \tag{d13} \\
&& p_{22}p_{24} + p_{23}p_{34} + p_{24}p_{44} + \lambda p_{24} &= 0 \tag{d14} \\
&& p_{24}p_{32} + p_{33}p_{34} + p_{34}p_{44} + \lambda p_{34} &= 0 \tag{d15}
\end{align}
\begin{align}
&& -p_{21}p_{34} + p_{24}p_{31} + p_{24}p_{42} + p_{34}p_{43} + p_{44}^{2} + \lambda p_{44} &= 0 \tag{d16} \\
&& p_{11}p_{12} + p_{12}p_{22} + p_{13}p_{32} + p_{14}p_{31} + p_{14}p_{42} + \lambda p_{12} &= 0 \tag{d17} \\
&& p_{22}^{2} + p_{23}p_{32} + p_{24}p_{31} + p_{24}p_{42} + \lambda p_{22} &= 0 \tag{d18} \\
&& p_{22}p_{32} + p_{31}p_{34} + p_{32}p_{33} + p_{34}p_{42} + \lambda p_{32} &= 0 \tag{d19} \\
&& p_{21}p_{32} - p_{22}p_{31} + p_{22}p_{42} + p_{31}p_{44} + p_{32}p_{43} + p_{42}p_{44} + \lambda p_{42} &= 0 \tag{d20}\\
&& p_{12}^{2} &= 0 \tag{d21} \\
&& p_{12}p_{13} + p_{14}p_{22} + p_{14}p_{33} + \lambda p_{14} &= 0 \tag{d22} \\
&& p_{22}p_{24} + p_{24}p_{33} + \lambda p_{24} &= 0 \tag{d23} \\
&& p_{22}p_{34} + p_{33}p_{34} + \lambda p_{34} &= 0 \tag{d24} \\
&& -p_{22}p_{33} + p_{22}p_{44} + p_{23}p_{32} + p_{33}p_{44} + \lambda p_{44} &= 0 \tag{d25}\\
&& p_{12}p_{14} + p_{14}p_{34} &= 0 \tag{d26} \\
&& p_{34}^{2} &= 0 \tag{d27}\\
&& p_{24}p_{34} &= 0 \tag{d28} \\
&& -p_{22}p_{34} + p_{24}p_{32} + p_{34}p_{44} &= 0 \tag{d29} \\
&& p_{11}p_{13} + p_{12}p_{23} + p_{13}p_{33} - p_{14}p_{21} + p_{14}p_{43} + \lambda p_{13} &= 0 \tag{d30} \\
&& -p_{21}p_{24} + p_{22}p_{23} + p_{23}p_{33} + p_{24}p_{43} + \lambda p_{23} &= 0 \tag{d31} \\
&& -p_{21}p_{34} + p_{23}p_{32} + p_{33}^{2} + p_{34}p_{43} + \lambda p_{33} &= 0 \tag{d32} \\
&& p_{21}p_{33} - p_{21}p_{44} - p_{23}p_{31} + p_{23}p_{42} + p_{33}p_{43} + p_{43}p_{44} + \lambda p_{43} &= 0 \tag{d33} \\
&& -p_{12}p_{13} + p_{14}p_{22} + p_{14}p_{33} + \lambda p_{14} &= 0 \tag{d34} \\
&& p_{22}p_{24} + p_{24}p_{33} + \lambda p_{24} &= 0 \tag{d35} \\
&& p_{22}p_{34} + p_{33}p_{34} + \lambda p_{34} &= 0 \tag{d36} \\
&& -p_{22}p_{33} + p_{22}p_{44} + p_{23}p_{32} + p_{33}p_{44} + \lambda p_{44} &= 0 \tag{d37} \\
&& p_{13}^{2} &= 0 \tag{d38} \\
&& -p_{13}p_{14} + p_{14}p_{24} &= 0 \tag{d39} \\
&& p_{24}^{2} &= 0 \tag{d40} \\
&& p_{24}p_{34} &= 0 \tag{d41} \\
&& p_{23}p_{34} - p_{24}p_{33} + p_{24}p_{44} &= 0 \tag{d42} \\
&& p_{11}p_{14} + p_{12}p_{24} + p_{13}p_{34} + p_{14}p_{44} + \lambda p_{14} &= 0 \tag{d43}
\end{align}
\begin{align}
&& p_{22}p_{24} + p_{23}p_{34} + p_{24}p_{44} + \lambda p_{24} &= 0 \tag{d44} \\
&& p_{24}p_{32} + p_{33}p_{34} + p_{34}p_{44} + \lambda p_{34} &= 0 \tag{d45} \\
&& p_{21}p_{34} - p_{24}p_{31} + p_{24}p_{42} + p_{34}p_{43} + p_{44}^{2} + \lambda p_{44} &= 0 \tag{d46} \\
&& -p_{12}p_{14} + p_{14}p_{34} &= 0 \tag{d47} \\
&& p_{24}p_{34} &= 0 \tag{d48}\\
&& p_{34}^{2} &= 0 \tag{d49} \\
&& -p_{22}p_{34} + p_{24}p_{32} + p_{34}p_{44} &= 0 \tag{d50} \\
&& p_{13}p_{14} + p_{14}p_{24} &= 0 \tag{d51} \\
&& p_{24}^{2} &= 0 \tag{d52} \\
&& p_{24}p_{34} &= 0 \tag{d53}\\
&& p_{23}p_{34} - p_{24}p_{33} + p_{24}p_{44} &= 0 \tag{d54} \\
&& p_{14}^{2} &= 0. \tag{d55}
\end{align}

It is difficult to finding all solutions for the system of equations (d1)-(d55) by using manual methods. With the help of scientific computation software-Mathematica,
by inputting the order \textbf{Reduce[(d1), (d2), · · · , (d55), $\{p_{11}$, $p_{12}$, · · · , · · · , $p_{43}$, $p_{44}$\}]}, we can get the main result in this section.

\begin{theorem}
$\mathcal{P}$ is a Rota-Baxter operator of weight $\lambda\neq 0$ on \texorpdfstring{$\bigwedge(\mathbb{R}^2)$}{Λ(R2)} if and only if the matrix corresponding to $\mathcal{P}$ has the following form:
\begin{center}
\hspace*{\fill}
$\begin{pmatrix}
0 & 0 & 0 & 0 \\
0 & 0 & 0 & 0 \\
0 & 0 & 0 & 0 \\
0 & 0 & 0 & 0
\end{pmatrix} \ (\text{no condition}),$
\hspace*{\fill}
$\begin{pmatrix}
-\lambda & 0 & 0 & 0 \\
0 & 0 & 0 & 0 \\
0 & 0 & 0 & 0 \\
0 & 0 & 0 & 0
\end{pmatrix} \ (p_{11} \neq 0),$
\hspace*{\fill}\newline

\hspace*{\fill}
$\begin{pmatrix}
0 & 0 & 0 & 0 \\
0 & 0 & p_{23} & 0 \\
0 & 0 & 0 & 0 \\
0 & 0 & 0 & 0
\end{pmatrix} \ (p_{23} \neq 0),$
\hspace*{\fill}
$\begin{pmatrix}
0 & 0 & 0 & 0 \\
p_{21} & 0 & 0 & 0 \\
p_{31} & 0 & 0 & 0 \\
0 & 0 & 0 & 0
\end{pmatrix} \ (p_{31} \neq 0),$
\hspace*{\fill}\newline

\hspace*{\fill}
$\begin{pmatrix}
0 & 0 & 0 & 0 \\
0 & 0 & 0 & 0 \\
0 & p_{32} & 0 & 0 \\
p_{41} & 0 & 0 & 0
\end{pmatrix} \ (p_{32} p_{41} \neq 0),$
\hspace*{\fill}
$\begin{pmatrix}
0 & 0 & 0 & 0 \\
p_{21} & 0 & p_{23} & 0 \\
0 & 0 & 0 & 0 \\
0 & 0 & 0 & 0
\end{pmatrix} \ (p_{21} \neq 0),$
\hspace*{\fill}\newline

\hspace*{\fill}
$\begin{pmatrix}
0 & 0 & 0 & 0 \\
0 & 0 & 0 & 0 \\
0 & 0 & 0 & 0 \\
p_{41} & p_{42} & 0 & 0
\end{pmatrix} \ (p_{41} p_{42} \neq 0),$
\hspace*{\fill}
$\begin{pmatrix}
0 & 0 & 0 & 0 \\
0 & 0 & 0 & 0 \\
0 & 0 & 0 & 0 \\
p_{41} & p_{42} & p_{43} & 0
\end{pmatrix} \ (p_{41} p_{42} p_{43} \neq 0),$
\hspace*{\fill}\newline

\hspace*{\fill}
$\begin{pmatrix}
0 & 0 & 0 & 0 \\
p_{21} & 0 & p_{23} & 0 \\
0 & 0 & 0 & 0 \\
0 & 0 & p_{43} & 0
\end{pmatrix} \ (p_{43} \neq 0),$
\hspace*{\fill}
$\begin{pmatrix}
0 & 0 & 0 & 0 \\
0 & 0 & 0 & 0 \\
p_{31} & 0 & 0 & 0 \\
p_{41} & p_{42} & 0 & 0
\end{pmatrix} \ (p_{31} p_{42} \neq 0),$
\hspace*{\fill}\newline

\hspace*{\fill}
$\begin{pmatrix}
0 & 0 & 0 & 0 \\
0 & 0 & 0 & 0 \\
p_{31} & p_{32} & 0 & 0 \\
p_{41} & 0 & 0 & 0
\end{pmatrix} \ (p_{31} p_{32} \neq 0),$
\hspace*{\fill}
$\begin{pmatrix}
0 & 0 & 0 & 0 \\
p_{21} & 0 & p_{23} & 0 \\
0 & 0 & 0 & 0 \\
p_{41} & 0 & 0 & 0
\end{pmatrix} \ (p_{41} \neq 0),$
\hspace*{\fill}\newline

\hspace*{\fill}
$\begin{pmatrix}
0 & 0 & 0 & 0 \\
0 & 0 & 0 & 0 \\
0 & p_{32} & 0 & 0 \\
p_{41} & p_{42} & 0 & 0
\end{pmatrix} \ (p_{41} \neq 0,\; p_{32} p_{42} \neq 0),$
\hspace*{\fill}
$\begin{pmatrix}
p_{11} & 0 & 0 & 0 \\
0 & -\lambda & 0 & 0 \\
0 & 0 & 0 & 0 \\
0 & p_{42} & 0 & 0
\end{pmatrix} \ (p_{42} \neq 0),$
\hspace*{\fill}\newline

\hspace*{\fill}
$\begin{pmatrix}
p_{11} & 0 & 0 & 0 \\
0 & 0 & p_{23} & 0 \\
0 & 0 & -\lambda & 0 \\
0 & 0 & 0 & 0
\end{pmatrix} \ (p_{33} \neq 0),$
\hspace*{\fill}
$\begin{pmatrix}
p_{11} & 0 & 0 & 0 \\
0 & -\lambda & 0 & 0 \\
0 & p_{32} & 0 & 0 \\
0 & 0 & 0 & 0
\end{pmatrix} \ (p_{32} \neq 0),$
\hspace*{\fill}\newline

\hspace*{\fill}
$\begin{pmatrix}
p_{11} & 0 & 0 & 0 \\
0 & -\lambda & p_{23} & 0 \\
0 & 0 & 0 & 0 \\
0 & 0 & 0 & 0
\end{pmatrix} \ (p_{22} \neq 0),$
\hspace*{\fill}
$\begin{pmatrix}
0 & 0 & 0 & 0 \\
p_{21} & 0 & 0 & 0 \\
p_{31} & 0 & 0 & 0 \\
p_{41} & 0 & 0 & 0
\end{pmatrix} \ (p_{31} \neq 0,\; p_{41} \neq 0),$
\hspace*{\fill}\newline

\hspace*{\fill}
$\begin{pmatrix}
p_{11} & 0 & 0 & 0 \\
0 & -\lambda & 0 & 0 \\
0 & 0 & 0 & 0 \\
0 & 0 & 0 & -\lambda
\end{pmatrix} \ (-p_{44} \neq 0),$
\hspace*{\fill}
$\begin{pmatrix}
0 & 0 & 0 & 0 \\
p_{21} & 0 & p_{23} & 0 \\
0 & 0 & 0 & 0 \\
p_{41} & 0 & p_{43} & 0
\end{pmatrix} \ (p_{41} p_{43} \neq 0),$
\hspace*{\fill}\newline

\hspace*{\fill}
$\begin{pmatrix}
0 & 0 & 0 & 0 \\
0 & 0 & 0 & 0 \\
p_{31} & p_{32} & 0 & 0 \\
p_{41} & p_{42} & 0 & 0
\end{pmatrix} \ (p_{31} p_{32} p_{42} \neq 0),$
\hspace*{\fill}
$\begin{pmatrix}
p_{11} & 0 & 0 & 0 \\
0 & -\lambda & 0 & 0 \\
0 & p_{32} & 0 & 0 \\
0 & p_{42} & 0 & 0
\end{pmatrix} \ (p_{32} p_{42} \neq 0),$
\hspace*{\fill}\newline

\hspace*{\fill}
$\begin{pmatrix}
p_{11} & 0 & 0 & 0 \\
0 & 0 & p_{23} & 0 \\
0 & 0 & -\lambda & 0 \\
0 & 0 & p_{43} & 0
\end{pmatrix} \ (p_{33} \neq 0,\; p_{43} \neq 0),$
\hspace*{\fill}
$\begin{pmatrix}
p_{11} & 0 & 0 & 0 \\
0 & p_{22} & 0 & 0 \\
0 & 0 & -\lambda & 0 \\
0 & 0 & 0 & -\lambda
\end{pmatrix} \ (p_{44} \neq 0),$
\hspace*{\fill}\newline

\hspace*{\fill}
$\begin{pmatrix}
p_{11} & 0 & 0 & 0 \\
0 & -\lambda & p_{23} & 0 \\
0 & 0 & 0 & 0 \\
0 & 0 & p_{43} & -\lambda
\end{pmatrix} \ (p_{43} p_{44} \neq 0),$
\hspace*{\fill}
$\begin{pmatrix}
p_{11} & 0 & 0 & 0 \\
0 & 0 & 0 & 0 \\
0 & p_{32} & -\lambda & 0 \\
0 & p_{42} & 0 & -\lambda
\end{pmatrix} \ (p_{42} \neq 0,\; p_{44} \neq 0),$
\hspace*{\fill}\newline

\hspace*{\fill}
$\begin{pmatrix}
p_{11} & 0 & 0 & 0 \\
0 & -\lambda-p_{33} & 0 & 0 \\
0 & p_{32} & p_{33} & 0 \\
0 & 0 & 0 & -\lambda
\end{pmatrix} \ (p_{32} p_{44} \neq 0),$
\hspace*{\fill}
$\begin{pmatrix}
p_{11} & 0 & 0 & 0 \\
0 & -\lambda & p_{23} & 0 \\
0 & 0 & 0 & 0 \\
0 & p_{42} & p_{43} & 0
\end{pmatrix} \ (p_{43} \neq 0,\; p_{42} \neq 0),$
\hspace*{\fill}\newline

\hspace*{\fill}
$\begin{pmatrix}
p_{11} & 0 & 0 & 0 \\
0 & -\lambda-p_{33} & p_{23} & 0 \\
0 & p_{32} & p_{33} & 0 \\
0 & 0 & 0 & 0
\end{pmatrix} \ (p_{33} \neq 0,\; p_{32} \neq 0),$
\hspace*{\fill}
$\begin{pmatrix}
p_{11} & 0 & 0 & 0 \\
0 & -\lambda-p_{33} & p_{23} & 0 \\
0 & 0 & p_{33} & 0 \\
0 & 0 & 0 & -\lambda
\end{pmatrix} \ (p_{23} \neq 0,\; p_{44} \neq 0),$
\hspace*{\fill}\newline

\hspace*{\fill}
$\begin{pmatrix}
0 & 0 & 0 & 0 \\
-\frac{p_{31} p_{43}}{p_{42}} & 0 & 0 & 0 \\
p_{31} & 0 & 0 & 0 \\
p_{41} & p_{42} & p_{43} & 0
\end{pmatrix} \ (p_{42} \neq 0,\; p_{31} p_{43} \neq 0),$
\hspace*{\fill}
$\begin{pmatrix}
p_{11} & 0 & 0 & 0 \\
0 & 0 & \frac{\lambda p_{43}}{p_{42}} & 0 \\
0 & 0 & -\lambda & 0 \\
0 & p_{42} & p_{43} & -\lambda
\end{pmatrix} \ (p_{42} \neq 0,\; p_{43} p_{44} \neq 0),$
\hspace*{\fill}\newline

\hspace*{\fill}
$\begin{pmatrix}
p_{11} & 0 & 0 & 0 \\
0 & -\lambda-p_{33} & p_{23} & 0 \\
0 & \frac{p_{33} p_{42}}{p_{43}} & p_{33} & 0 \\
0 & p_{42} & p_{43} & 0
\end{pmatrix} \ (p_{43} \neq 0,\; p_{33} \neq 0,\; p_{42} \neq 0),$
\hspace*{\fill}
$\begin{pmatrix}
p_{11} & 0 & 0 & 0 \\
0 & -\lambda-p_{33} & -\frac{p_{33}^2}{p_{32}} & 0 \\
0 & p_{32} & p_{33} & 0 \\
p_{41} & 0 & 0 & 0
\end{pmatrix} \ (p_{32} p_{41} \neq 0,\; p_{33} \neq 0),$
\hspace*{\fill}\newline

\hspace*{\fill}
$\begin{pmatrix}
0 & 0 & 0 & 0 \\
-\frac{p_{31} p_{33}}{p_{32}} & -p_{33} & -\frac{p_{33}^2}{p_{32}} & 0 \\
p_{31} & p_{32} & p_{33} & 0 \\
p_{41} & 0 & 0 & 0
\end{pmatrix} \ (p_{32} \neq 0,\; p_{33} \neq 0,\; p_{23} p_{31} \neq 0),$
\hspace*{\fill}
$\begin{pmatrix}
p_{11} & 0 & 0 & 0 \\
0 & -\lambda-p_{33} & -\frac{p_{33} p_{43}}{p_{42}} & 0 \\
0 & \frac{(\lambda+p_{33})p_{42}}{p_{43}} & p_{33} & 0 \\
0 & p_{42} & p_{43} & -\lambda
\end{pmatrix} \ (p_{43} \neq 0,\; p_{42} \neq 0,\; p_{33} p_{44} - p_{44}^2 \neq 0),$
\hspace*{\fill}\newline

\hspace*{\fill}
$\begin{pmatrix}
p_{11} & 0 & 0 & 0 \\
0 & -\lambda-p_{33} & \frac{-\lambda p_{33}-p_{33}^2}{p_{32}} & 0 \\
0 & p_{32} & p_{33} & 0 \\
0 & 0 & 0 & -\lambda
\end{pmatrix} \ (p_{32} \neq 0,\; p_{23} \neq 0,\; p_{44} \neq 0),$
\hspace*{\fill}
$\begin{pmatrix}
0 & 0 & 0 & 0 \\
-\frac{p_{31} p_{43}}{p_{42}} & -p_{33} & -\frac{p_{33} p_{43}}{p_{42}} & 0 \\
p_{31} & \frac{p_{33} p_{42}}{p_{43}} & p_{33} & 0 \\
p_{41} & p_{42} & p_{43} & 0
\end{pmatrix} \ (p_{43} \neq 0,\; p_{32} \neq 0,\; p_{33} \neq 0,\; p_{23} p_{31} \neq 0),$
\hspace*{\fill}\newline

\hspace*{\fill}
$\begin{pmatrix}
p_{11} & 0 & 0 & 0 \\
0 & -\lambda-p_{33} & -\frac{p_{33} p_{43}}{p_{42}} & 0 \\
0 & \frac{p_{33} p_{42}}{p_{43}} & p_{33} & 0 \\
p_{41} & p_{42} & p_{43} & 0
\end{pmatrix} \ (p_{43} \neq 0,\; p_{32} p_{41} \neq 0,\; p_{33} \neq 0)$
\hspace*{\fill}
\end{center}
where $p_{21},p_{23},p_{31},p_{32}, p_{33},p_{41},p_{42},p_{43}$ are free parameters.
\end{theorem}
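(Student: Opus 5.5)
The plan is to prove both directions through the polynomial system (d1)--(d55). By Theorem 3.1 this system is equivalent to the Rota--Baxter identity of weight $\lambda$. Sufficiency is then a direct substitution: for each of the listed families, plug the matrix into (d1)--(d55) and check that every equation vanishes identically under the stated side conditions. The real content is necessity, which I would prove by a hand-made case tree rather than by appeal to \textbf{Reduce}, so that completeness of the list can be checked.

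\emph{Step 1: kill the easy entries.} The pure-square equations (d21), (d38), (d55), (d27), (d40) force $p_{12}=p_{13}=p_{14}=p_{34}=p_{24}=0$. Consequently $\mathcal{P}(e_3)=p_{44}e_3$ and $\mathcal{P}(e_0)\in\mathbb{R}e_0\oplus\operatorname{span}\{e_1,e_2,e_3\}$. With these zeros, (d1) becomes $p_{11}(p_{11}+\lambda)=0$ and (d16) becomes $p_{44}(p_{44}+\lambda)=0$. Hence $p_{11},p_{44}\in\{0,-\lambda\}$, which gives the first two binary splits. Many equations (d13)--(d15), (d22)--(d24), (d26)--(d29), (d39)--(d55) then become trivial.

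\emph{Step 2: use the paired equations to get linear relations.} Subtracting paired equations cancels the quadratic terms and leaves short relations, for example (d8)$-$(d20), (d12)$-$(d33), (d10)$-$(d31). Concretely, (d8)$-$(d20) gives $p_{21}p_{32}=p_{31}(p_{22}-p_{44})$ and (d12)$-$(d33) gives $p_{23}p_{31}=p_{21}(p_{33}-p_{44})$. Adding the pairs instead gives the ``symmetric'' equations, e.g.\ $p_{22}p_{42}+p_{32}p_{43}+p_{42}p_{44}+\lambda p_{42}=0$ and its analogue for $p_{43}$. Together with (d6), (d11), (d7), (d10), (d25), this yields the equations on the middle $2\times2$ block $B=\begin{pmatrix}p_{22}&p_{23}\\p_{32}&p_{33}\end{pmatrix}$:
\begin{itemize}
\item $p_{22}^2+p_{23}p_{32}+\lambda p_{22}=0$,
\item $p_{33}^2+p_{23}p_{32}+\lambda p_{33}=0$,
\item $p_{32}(p_{22}+p_{33}+\lambda)=0$,
\item $p_{23}(p_{22}+p_{33}+\lambda)=0$.
\end{itemize}
So either $\operatorname{tr}B=-\lambda$ with $\det B=0$ (a rank-one idempotent-type block, giving the $-\lambda-p_{33}$ entries), or $B$ is diagonal with entries in $\{0,-\lambda\}$. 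Equation (d25) then ties this to $p_{44}$. The first column ($p_{21},p_{31},p_{41}$) is governed by (d2)--(d4) and the two bilinear relations above.

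\emph{Step 3: run the case tree.} Branch first on $p_{11}\in\{0,-\lambda\}$ and $p_{44}\in\{0,-\lambda\}$. Then branch on the type of $B$. Finally branch on which of $p_{42},p_{43},p_{31}$ vanish; the denominators in the listed forms (e.g.\ $p_{21}=-p_{31}p_{43}/p_{42}$) come exactly from solving the bilinear relations when the divisor is nonzero. In each leaf I would solve the remaining linear equations in the first column and record the resulting matrix, then match it with one of the families in the statement.

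The main obstacle is Step 3. The problem is not any single computation but ensuring the leaves exhaust all solutions and reconciling them with the stated list, whose side conditions overlap and in places are phrased oddly (e.g.\ ``$p_{33}\neq0$'' attached to a matrix where that entry is $-\lambda$). Before writing the tree, I would first try to reduce branching structurally. Since $\mathbb{R}e_3$ is an ideal, $\mathcal{P}$ preserves it, so $\mathcal{P}$ descends to a Rota--Baxter operator on the quotient $\bigwedge(\mathbb{R}^2)/\mathbb{R}e_3$. This controls $B$ and the first column independently of the $e_3$-row, and should organize the leaves cleanly.
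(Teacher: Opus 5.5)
Your reductions in Step 1 and Step 2 are correct, but the plan cannot succeed, because the statement itself is false. Many of the listed matrices do not satisfy (d1)--(d55). So your sufficiency check by substitution fails, and the leaves of a correct case tree cannot be matched to the list.

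The decisive point you missed is in (d2)--(d3). With $p_{24}=p_{34}=0$ they read $(2B+\lambda I)(p_{21},p_{31})^{T}=0$. From $B^{2}=-\lambda B$ you get $(2B+\lambda I)^{2}=\lambda^{2}I$, so $p_{21}=p_{31}=0$. Then (d4) becomes $(2p_{44}+\lambda)p_{41}=0$ with $(2p_{44}+\lambda)^{2}=\lambda^{2}$, so $p_{41}=0$. (Intrinsically, with $J=\operatorname{span}\{e_1,e_2,e_3\}$: the $J$-component of the identity at $x=y=e_0$ says that $u=\mathcal{P}(e_0)-p_{11}e_0$ satisfies $\mathcal{P}(u)=-\tfrac{\lambda}{2}u$, whereas $\mathcal{P}|_J$ only has eigenvalues in $\{0,-\lambda\}$.)

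Hence $\mathcal{P}(e_0)=p_{11}e_0$ for every Rota--Baxter operator of nonzero weight. The relations from (d8)$-$(d20) and (d12)$-$(d33) become vacuous, and the leaf $p_{21}=-p_{31}p_{43}/p_{42}$ you expect never occurs. Concretely:
\begin{itemize}
\item The fourth listed family, $\mathcal{P}(e_0)=p_{21}e_1+p_{31}e_2$ and $\mathcal{P}(e_i)=0$ for $i\ge 1$, gives $\mathcal{P}(e_0)^2=0$ but $\mathcal{P}(2\mathcal{P}(e_0)+\lambda e_0)=\lambda(p_{21}e_1+p_{31}e_2)\neq 0$.
\item The family with only $p_{23}\neq 0$ violates (d10), which reduces to $\lambda p_{23}=0$.
\end{itemize}

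Other entries fail because parameters the list treats as free are actually pinned:
\begin{itemize}
\item $p_{11}\in\{0,-\lambda\}$ by (d1).
\item $\operatorname{diag}(p_{11},p_{22},-\lambda,-\lambda)$ needs $p_{22}\in\{0,-\lambda\}$.
\item Several families with $-\lambda-p_{33}$ in position $(2,2)$ violate (d25), $\det B=p_{44}(\operatorname{tr}B+\lambda)$, i.e.\ $\det B=0$, for generic values of their parameters.
\end{itemize}
Several spurious families are weight-zero operators that appear (up to specialization) in the paper's weight-zero list. This suggests that \textbf{Reduce} branches with $\lambda=0$ were copied without that condition. Since the paper's proof consists only of that computer call, it gives you nothing to fall back on.

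Carried out honestly, your own case tree proves a corrected classification:
\begin{itemize}
\item $\mathcal{P}(e_0)=p_{11}e_0$ and $\mathcal{P}(e_3)=p_{44}e_3$ with $p_{11},p_{44}\in\{0,-\lambda\}$;
\item $B^{2}=-\lambda B$;
\item $\det B=p_{44}(\operatorname{tr}B+\lambda)$, from (d25);
\item $(B^{T}+(\lambda+p_{44})I)(p_{42},p_{43})^{T}=0$, from (d8)$+$(d20) and (d12)$+$(d33).
\end{itemize}
Equivalently, one of the following holds:
\begin{itemize}
\item $B=0$, $p_{44}=0$, $p_{42}=p_{43}=0$;
\item $B=-\lambda I$, $p_{44}=-\lambda$, $p_{42}=p_{43}=0$;
\item $B$ has rank one and trace $-\lambda$, $p_{44}$ takes either value, and $(p_{42},p_{43})$ lies on the line $\ker(B^{T}+(\lambda+p_{44})I)$.
\end{itemize}
That, not the stated list, is what your method establishes.
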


\vspace{1cm}
%






\end{document}